\newtheorem{theorem}{\sc Theorem}[section]
\newtheorem{lemma}[theorem]{\sc Lemma}
\newtheorem{hypothesis}[theorem]{\sc Hypothesis}
\author{Pavel Shumyatsky}
\address{Department of Mathematics, University of Brasilia,
Brasilia-DF, 70910-900 Brazil}
\email{pavel@unb.br}
\title[Coverings by nilpotent subgroups]{On profinite groups with commutators covered by nilpotent subgroups}
\thanks{This work was  supported by CAPES and CNPq.}
\subjclass[2010]{20E18; 20F14}
\keywords{ profinite groups; nilpotent subgroups; commutators}
\begin{document}

\begin{abstract} The following results about a profinite group $G$ are obtained. The commutator subgroup $G'$ is finite if and only if $G$ is covered by countably many abelian subgroups. The group $G$ is finite-by-nilpotent if and only if $G$ is covered by countably many nilpotent subgroups. The main result is that the commutator subgroup $G'$ is finite-by-nilpotent if and only if the set of all commutators in $G$ is covered by countably many nilpotent subgroups.
\end{abstract}
\maketitle

\section{Introduction}

Let $G$ be a profinite group. If $G$ is covered by countably many closed subgroups, then by Baire Category Theorem at least one of the subgroups is open. This simple observation suggests that if $G$ is covered by countably many closed subgroups with certain specific properties, then the structure of $G$ is similar to that of the covering subgroups. For example, if $G$ is covered by countably many periodic subgroups, then $G$ is locally finite. We recall that the group $G$ is periodic (or torsion) if each element in $G$ has finite order. The group is locally finite if each of its finitely generated subgroups is finite. Following his solution of the restricted Burnside problem \cite{ze1,ze2} and using Wilson's reduction theorem \cite{Wil}, Zelmanov proved that periodic compact groups are locally  finite \cite{Zel}. Another example is that if $G$ is covered by countably many subgroups of finite rank, then $G$ has finite rank. The profinite group $G$ is said to have finite rank at most $r$ if each closed subgroup of $G$ can be topologically generated by at most $r$ elements. It was shown in the recent paper \cite{as2} that a profinite group is covered by countably many procyclic subgroups if and only if it is finite-by-procyclic.

In the present article we will establish more results of this nature.

\begin{theorem}\label{a} For a profinite group $G$, the following conditions are equivalent.
\begin{enumerate}
\item The group $G$ is covered by countably many abelian subgroups;
\item The group $G$ has finite commutator subgroup;
\item The group $G$ is central-by-finite.
\end{enumerate}
\end{theorem}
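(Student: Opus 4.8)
The plan is to establish the cycle $(1)\Rightarrow(2)\Rightarrow(3)\Rightarrow(1)$. The implication $(3)\Rightarrow(1)$ is immediate: if $Z(G)$ has finite index, pick coset representatives $g_1,\dots,g_m$; then each $\overline{\langle g_i\rangle}\,Z(G)$ is a closed abelian subgroup (the product of the central closed subgroup $Z(G)$ with a procyclic group commuting with it), and these finitely many subgroups cover $G$. For the two substantive directions I would first replace each covering subgroup by its closure, still abelian, so I may assume $G=\bigcup_{i\ge 1}A_i$ with every $A_i$ closed and abelian. Since $G$ is compact Hausdorff, it is a Baire space, so some $A_i$ has nonempty interior and is therefore open; replacing it by its core $N=\Core_G(A_i)$ I obtain an abelian normal subgroup $N$ of finite index with $N\le A_i$.

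To prove $(1)\Rightarrow(2)$ I must upgrade this to finiteness of $G'$ by controlling the conjugation action of $G$ on $N$, and the key idea is to apply Baire's theorem a second time, now coset by coset. Fix $g\in G$; the coset $Ng$ is a compact subset of $G$ covered by the countably many closed sets $A_i\cap Ng$, each a coset of $A_i\cap N$ in $Ng$. Baire's theorem forces some $A_i\cap Ng$ to have nonempty interior in $Ng$, hence $A_i\cap N$ to be open (finite index) in $N$, with $A_i\cap Ng\neq\emptyset$. Picking $x\in A_i\cap Ng$, abelianness of $A_i$ gives $A_i\cap N\le C_N(x)=C_N(g)$, the last equality because $x\in Ng$ and $N$ is abelian; thus $C_N(g)$ has finite index in $N$. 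Since $a\mapsto[a,g]$ is an endomorphism of $N$ with kernel $C_N(g)$, its image $[N,g]$ is finite. As $N$ has finitely many cosets and $[N,G]$ is generated by the finitely many finite subgroups $[N,g_j]$ of the abelian group $N$, the subgroup $[N,G]$ is finite. Finally, in $G/[N,G]$ the image of $N$ is central of finite index, so by Schur's theorem $G'[N,G]/[N,G]$ is finite; hence $G'$ is finite, which is $(2)$.

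For $(2)\Rightarrow(3)$, assume $G'$ is finite. Then $C:=C_G(G')$ is the kernel of $G\to\aut(G')$ into a finite group, so $C$ is open, of finite index, and normal; moreover $C'\le G'\le Z(C)$, so $C$ is nilpotent of class at most $2$ with finite commutator subgroup. The crux is the profinite fact that the continuous nondegenerate pairing $C/Z(C)\times C/Z(C)\to C'$, $(\bar x,\bar y)\mapsto[x,y]$ (a homomorphism in each variable since $C$ has class $2$), into a finite group forces $C/Z(C)$ to be finite. Indeed, continuity at the origin yields an open abelian subgroup $V\le C$ containing $Z(C)$; for $v\in V$ the homomorphism $[v,-]\colon C\to C'$ is trivial on $V$ and hence factors through the finite group $C/V$, so $v\mapsto[v,-]$ has finite image and its kernel $Z(C)$ has finite index in $V$, whence $C/Z(C)$ is finite. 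Thus $Z(C)$ is abelian, normal, and of finite index in $G$. Choosing representatives $g_1,\dots,g_m$ of $C$ in $G$, each map $w\mapsto[w,g_i]$ sends $Z(C)$ into the finite group $G'$, so its kernel has finite index; intersecting over $i$ gives a finite-index subgroup of $Z(C)$ contained in $Z(G)$. Hence $Z(G)$ has finite index, which is $(3)$.

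I expect the main obstacle to lie in the two places where "abelian-by-finite" data is insufficient and genuine input is needed. The first is the coset-by-coset application of Baire's theorem in $(1)\Rightarrow(2)$: this is exactly what excludes inverting actions such as $\Z_p\rtimes C_2$, which is abelian-by-finite yet not covered by countably many abelian subgroups. The second is the continuity-plus-nondegeneracy lemma in $(2)\Rightarrow(3)$, where profiniteness is indispensable, since an abstract infinite extraspecial group has finite commutator subgroup but infinite central quotient. Everything else reduces to routine commutator identities and Schur's theorem.
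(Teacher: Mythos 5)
Your proof is correct apart from one small slip (noted below), and it is organized along a genuinely different cycle from the paper's, so a comparison is worthwhile. For the substantive direction both you and the paper run the same double Baire-category mechanism: one application yields an open abelian subgroup, and a second application, coset by coset, controls how elements of $G$ act on it. But you draw different conclusions from this. The paper proves $(1)\Rightarrow(3)$ directly: writing $G=\langle G_1,g_1,\dots,g_n\rangle$ with $G_1$ open abelian, it extracts for each $i$ an open subgroup $D_i$ with $[g_i,D_i\cap G_1]=1$, so that $G_1\cap\bigcap_i D_i$ is an open subgroup of $Z(G)$. You instead prove $(1)\Rightarrow(2)$: from the finite-index centralizers $C_N(g)$ you deduce that each $[N,g]$ is finite, hence that $[N,G]$ is finite, and then apply Schur's theorem in $G/[N,G]$. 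This is sound, and it buys an explicit finite normal subgroup modulo which $G$ is central-by-finite; but it is slightly roundabout, since with the same data (finitely many coset representatives $g_j$ and the open subgroups $C_N(g_j)$) you could intersect the centralizers and land on $(3)$ at once. The larger structural difference is your $(2)\Rightarrow(3)$. The paper's version is a one-liner exploiting profiniteness: if $G'$ is finite, choose an open normal subgroup $N$ of $G$ with $N\cap G'=1$; then $[N,G]\le N\cap G'=1$, so $N\le Z(G)$. Your argument via $C=C_G(G')$, nilpotency class two, and continuity of the commutator pairing is valid (and the ``profinite fact'' you isolate is genuine, as your extraspecial example shows), but it is much heavier than necessary.

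The slip: in the chain $C'\le G'\le Z(C)$ the middle inclusion is false in general, since $G'\le Z(C)$ would in particular force $G'\le C=C_G(G')$, i.e.\ $G'$ abelian, which is not part of hypothesis $(2)$ (take $G$ to be the symmetric group of degree $4$, where $C=1$). What your argument actually needs, and what your own reasoning does give, is $C'\le Z(C)$: indeed $C'\le G'\cap C$, and $[G',C]=1$ by the definition of $C$, so every element of $G'\cap C$ is central in $C$. With this correction the class-two argument, and the rest of your proof, goes through unchanged.
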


\begin{theorem}\label{b} For a profinite group $G$, the following conditions are equivalent.
\begin{enumerate}
\item The group $G$ is covered by countably many nilpotent subgroups;
\item The group $G$ is finite-by-nilpotent;
\item There exists a positive integer $m$ such that $Z_m(G)$ is open.
\end{enumerate}
\end{theorem}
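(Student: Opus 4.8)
The plan is to prove the cycle of implications, treating $(2)\Leftrightarrow(3)$ as essentially classical and concentrating the real work on $(1)\Rightarrow(3)$. For $(2)\Leftrightarrow(3)$ I would invoke the theorems of P.~Hall relating the two central series: if $G/Z_c(G)$ is finite then $\gamma_{c+1}(G)$ is finite, and conversely if $\gamma_{c+1}(G)$ is finite then $G/Z_{2c}(G)$ is finite. Since $G$ is finite-by-nilpotent exactly when $\gamma_{c+1}(G)$ is finite for some $c$ (the quotient $G/\gamma_{c+1}(G)$ then being nilpotent), and since a closed subgroup of a profinite group is open precisely when it has finite index, these two statements translate directly into $(2)\Leftrightarrow(3)$; as $Z_m(G)$ and the closures of $\gamma_{m+1}(G)$ are closed, no topological difficulty arises.

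For $(3)\Rightarrow(1)$ I would argue as follows. If $Z_m(G)$ is open, choose coset representatives $g_1,\dots,g_n$ for the finite group $G/Z_m(G)$ and set $H_j=\overline{\langle g_j\rangle}\,Z_m(G)$. The standard inclusion $Z_m(G)\cap H_j\le Z_m(H_j)$ gives $Z_m(G)\le Z_m(H_j)$, so $H_j/Z_m(H_j)$ is a quotient of the procyclic group $H_j/Z_m(G)$. The "central quotient procyclic $\Rightarrow$ abelian" argument, which survives in the profinite category by continuity of the commutator map, then forces $H_j=Z_m(H_j)$, so each $H_j$ is nilpotent of class at most $m$. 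Since $G=\bigcup_j g_jZ_m(G)\subseteq\bigcup_j H_j$, this exhibits $G$ as a union of finitely many nilpotent subgroups.

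The heart of the matter is $(1)\Rightarrow(3)$. Taking closures (the closure of a nilpotent subgroup is nilpotent of the same class) and applying the Baire category theorem to the compact group $G$ produces an open nilpotent subgroup; replacing it by its normal core, I may assume $G$ has an open normal nilpotent subgroup $N$ of some class $c$ with $G/N$ finite. This gives only "nilpotent-by-finite", which is strictly weaker than the desired conclusion — the pro-dihedral group $\Z_p\rtimes C_2$ is nilpotent-by-finite but not finite-by-nilpotent — so the countability of the cover, together with the uncountability of infinite profinite groups, must be used decisively. The mechanism I propose is a coset-wise Baire argument: for a coset $gN$ one has $gN=\bigcup_i(N_i\cap gN)$, and each nonempty $N_i\cap gN$ is a closed coset $x_i(N_i\cap N)$ of the subgroup $N_i\cap N\le N$. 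Baire category inside the compact space $gN$ yields an $i$ with $N_i\cap N$ open in $N$ and an element $x\in gN\cap N_i$, whence $\langle x,\,N_i\cap N\rangle$ is nilpotent; that is, $x$ acts "unipotently" on an open subgroup of $N$. It is exactly here that countability is indispensable: for the pro-dihedral group a reflection acts by inversion, which is not unipotent, matching the fact that such a group has no countable nilpotent cover.

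Running this over representatives $g_1,\dots,g_n$ of $G/N$ and intersecting, I obtain one open normal nilpotent subgroup $M=\Core_G\!\bigl(\bigcap_j M_j\bigr)$ and representatives $x_j$ with each $\langle x_j,M\rangle$ nilpotent, while $G=\langle x_1,\dots,x_n,N\rangle$ and $N$ acts nilpotently on $M$ since $M\le N$. The goal is to upgrade this to $M\le Z_k(G)$ for some $k$, which, $M$ being open, gives $(3)$. I would descend to the abelian factors $\gamma_i(M)/\gamma_{i+1}(M)$ of the finite $G$-invariant lower central series of $M$: on each factor $V$ the image of $G$ in $\aut(V)$ is generated by the normal, nilpotently-acting image of $N$ together with finitely many unipotent elements, and is unipotent-by-finite. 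Over a field of characteristic zero the Zariski closure has unipotent radical equal to its connected component, every unipotent element lies there, and the image of $G$ is itself unipotent; assembling the resulting $G$-central series of the factors yields $M\le Z_k(G)$, in the spirit of the abelian case treated in Theorem~\ref{a}. The step I expect to be the main obstacle is precisely this passage from "each representative acts unipotently" to "$G$ acts nilpotently": the factors $V$ are profinite abelian groups with pro-$p$ parts, where unipotent automorphisms of finite order need not be trivial and the algebraic-group machinery must be replaced by a direct stability-group analysis over $\Z_p$ and $\FF_p$, controlling the interaction of the finite quotient $G/N$ with these modules uniformly across the finitely many central factors.
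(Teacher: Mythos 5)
Your handling of $(2)\Leftrightarrow(3)$ (Hall's theorem plus the fact that a closed subgroup of finite index is open) and of $(3)\Rightarrow(1)$ (the subgroups $\langle g_j,Z_m(G)\rangle$ are nilpotent of class at most $m$ because they are procyclic modulo $Z_m(G)\le Z_m(H_j)$) is correct and is exactly the paper's argument. Likewise the opening of $(1)\Rightarrow(3)$ --- Baire to get an open normal nilpotent subgroup $N$, then a coset-wise Baire argument producing representatives that generate a nilpotent subgroup together with an open piece of $N$ --- coincides with the paper's strategy (this is the content of Lemma \ref{4b}). Your pro-dihedral example correctly explains why countability must enter at this point.

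The gap is the step you yourself call ``the main obstacle,'' and it is not a technicality: it is the entire mathematical content of $(1)\Rightarrow(3)$, and your proposal contains no argument for it. What is needed is: if a finite group acts on an abelian profinite group $M$ so that every element acts unipotently of degree at most $k$, then the whole group acts nilpotently on $M$ of degree bounded in terms of $k$ and the group order --- with the bound \emph{uniform in $M$}, since $M$ is infinite and any argument through finite quotients whose class bound grows with the quotient evaporates in the inverse limit. Your proposed mechanism cannot supply this: a profinite abelian group is the direct product of its Sylow pro-$p$ subgroups, so the relevant modules have no characteristic-zero part at all and the Zariski-closure/unipotent-radical argument applies vacuously; the ``direct stability-group analysis over $\Z_p$ and $\FF_p$'' you defer to is precisely the theorem that has to be proved. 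The paper supplies it as Lemma \ref{3b}, deduced from the Crosby--Traustason theorem on right $n$-Engel subgroups \cite{crtr} --- a genuinely nontrivial external input, not a routine stability computation. There is also a structural mismatch in your reduction: your Baire argument gives unipotence only for the finitely many chosen representatives of $G/N$, whereas a group generated by unipotent elements need not act nilpotently; the paper avoids this by inducting on the nilpotency class of $N$ so that the module is $Z(N)$, whose centralizer $C_G(Z(N))\supseteq N$ is open, and by choosing one representative in \emph{each coset of the centralizer}, so that modulo the kernel of the action the representatives exhaust the finite acting group $G/C_G(Z(N))$ and elementwise unipotence holds with a uniform degree. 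Your configuration can be massaged into that form (every $g\in G$ acts as its representative does on the factors of the series $[V,{}_iN]$, giving elementwise unipotence of degree at most $kc$), but even after that repair the uniform-bound lemma still has to be proved or cited; without it the proof is incomplete.
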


Here $Z_m(G)$ denotes the $m$th term of the upper central series of $G$. Both above theorems are in parallel with well-known results on abstract groups covered by finitely many abelian, or nilpotent, subgroups. R. Baer \cite[4.16]{rob1} characterized central-by-finite groups as those groups having a
finite covering by abelian subgroups. In 1992 M. J. Tomkinson showed that a group $G$ has a finite covering by nilpotent subgroups if and only if $Z_m(G)$ has finite index in $G$ for some positive $m$ \cite{tomki}. Some related issues have been also addressed in \cite{kappe}. In view of Hall's theorem \cite{hhh} it follows that a group $G$ has a finite covering by nilpotent subgroups if and only if $G$ is finite-by-nilpotent. It is interesting to observe that our Theorems \ref{a} and \ref{b} show that a profinite group $G$ admits a countable covering by abelian (respectivly, nilpotent) subgroups if and only if $G$ admits a finite covering by subgroups with the respective property. The proofs of the theorems do not use the results on finite coverings of abstract groups. 

If $x,y\in G$, then $[x,y]=x^{-1}y^{-1}xy$ is the commutator of $x$ and $y$.
The closed subgroup of $G$ generated by all commutators is the commutator subgroup $G'$ of $G$. In general, elements of $G'$ need not be commutators (see for instance \cite{km} and references therein). On the other hand, Nikolov and Segal showed that for any positive integer $m$ there exists an integer $f(m)$ such that if $G$ is $m$-generator, then every element in $G'$ is a product of at most $f(m)$ commutators \cite{nisega}. Several recent results  indicate that if the set of all commutators is covered by finitely, or countably, many subgroups with certain specific properties, then the structure of $G'$ is somehow similar to that of the covering subgroups.

It was shown in \cite{AS1} that if $G$ is a profinite group that has finitely many periodic subgroups (respectively, subgroups of finite rank) whose union contains all commutators, then $G'$ is locally finite (respectively, $G'$ is of finite rank). In \cite{AS0} similar results were obtained for the case where commutators are covered by countably many subgroups: if $G$ is a profinite group that has countably many periodic subgroups (respectively, subgroups of finite rank) whose union contains all commutators, then $G'$ is locally finite (respectively, $G'$ is of finite rank). In \cite{DMS} the corresponding results were obtained for profinite groups in which commutators of higher order are covered by countably many periodic subgroups, or subgroups of finite rank.

Profinite groups in which commutators are covered by procyclic subgroups were studied in \cite{FMS}. It was shown that if $G$ is a profinite group that has finitely many, say $m$, procyclic subgroups whose union contains all commutators, then $G'$ is finite-by-procyclic. In fact, $G'$ has a finite characteristic subgroup $M$ of $m$-bounded order such that $G'/M$ is procyclic. Moreover, if $G$ is a pro-$p$ group that has $m$ procyclic subgroups whose union contains all commutators, then $G'$ is either finite of $m$-bounded order or procyclic. Earlier, Fern\'andez-Alcober and Shumyatsky proved that if $G$ is an abstract group in which the set of all commutators is covered by finitely many cyclic subgroups, then the commutator subgroup $G'$ is either finite or cyclic \cite{FerShu}. Profinite groups in which the commutators are covered by countably many procyclic subgroups were dealt with in \cite{as2}. Such groups were characterized precisely as groups whose commutator subgroup is finite-by-procyclic.

In the present article we study profinite groups in which the commutators are covered by countably many nilpotent subgroups. Our main result is the following theorem.

\begin{theorem}\label{mainth} Let $G$ be a profinite group. The following conditions are equivalent.
\begin{enumerate}
\item The set of all commutators in $G$ is covered by countably many nilpotent subgroups;
\item The commutator subgroup $G'$ is finite-by-nilpotent;
\item There exists a positive integer $m$ such that $Z_m(G')$ is open in $G'$.
\end{enumerate}
\end{theorem}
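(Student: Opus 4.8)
The equivalence of (2) and (3) is immediate from Theorem \ref{b}: applying that theorem to the profinite group $G'$ (which is itself profinite, being a closed subgroup of $G$) shows that $G'$ is finite-by-nilpotent if and only if $Z_m(G')$ is open in $G'$ for some positive integer $m$. The implication (2) $\Rightarrow$ (1) is equally quick: if $G'$ is finite-by-nilpotent, then by the implication (2) $\Rightarrow$ (1) of Theorem \ref{b} applied to $G'$, the group $G'$ is covered by countably many nilpotent subgroups, and since every commutator lies in $G'$, the set of all commutators is covered by the same subgroups. Thus the whole content of the theorem is the implication (1) $\Rightarrow$ (2).

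To prove (1) $\Rightarrow$ (2), I would first observe that the set $K$ of all commutators is closed in $G$, being the image of the compact space $G\times G$ under the continuous commutator map $f\colon (x,y)\mapsto [x,y]$. Replacing each covering subgroup by its closure (the closure of a nilpotent subgroup of class $c$ is again nilpotent of class $c$), I may assume the covering subgroups $N_1,N_2,\dots$ are closed. Then $G\times G=\bigcup_i f^{-1}(N_i)$ is a covering of the compact group $G\times G$ by countably many closed sets, so by the Baire Category Theorem some $f^{-1}(N_{i_0})$ has nonempty interior. Hence there are an open normal subgroup $H$ of $G$ (take the normal core of the relevant open subgroups) and elements $a,b\in G$ with $[aH,bH]\subseteq N_{i_0}$. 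The first substantial step is then to convert this ``box'' of commutators, via commutator identities and the fact that $N_{i_0}$ is a closed nilpotent subgroup, into the statement that some open normal subgroup $H_0\trianglelefteq G$ has $H_0'$ contained in a nilpotent subgroup, hence $H_0'$ itself nilpotent. Controlling the conjugation terms that arise here (the box mixes the cosets $aH$ and $bH$) is the first technical difficulty.

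The heart of the argument is to upgrade this local information to a global conclusion, and here I would prove, by induction on $c$, the stronger statement: if the commutators of a profinite group are covered by countably many nilpotent subgroups and the group has an open normal subgroup whose derived subgroup is nilpotent of class at most $c$, then its commutator subgroup is finite-by-nilpotent. The mere existence of an open nilpotent subgroup of $G'$ does not suffice to force $G'$ to be finite-by-nilpotent (as the infinite dihedral pro-$2$ group shows, where $G'$ nonetheless happens to be abelian), so the covering hypothesis must genuinely re-enter. For the inductive step, since $H_0\trianglelefteq G$ the term $\gamma_c(H_0')$ is central in $H_0'$ and normal in $G$; passing to $\overline{G}=G/\gamma_c(H_0')$ leaves the covering hypothesis intact (the images of the $N_i$ are nilpotent and cover the commutators of $\overline{G}$) while lowering the relevant class to $c-1$, so by induction $G'/(G'\cap\gamma_c(H_0'))$ is finite-by-nilpotent. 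Reassembling $G'$ from this quotient and the kernel $G'\cap\gamma_c(H_0')$, using Hall's theorems on finite-by-nilpotent groups \cite{hhh} to keep the nilpotency class and the finite residue under control, completes the step.

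The base case $c=0$, in which $H_0$ is abelian so that $G$ is abelian-by-finite with open abelian normal subgroup $A$, is the real obstacle. Here every commutator $[a,g]$ with $a\in A$ lies in the abelian group $A$, and the covering of commutators by nilpotent subgroups restricts to a covering of these elements inside $A$. The task is to show that the module action of the finite group $G/A$ on $A$ has only a finite ``non-nilpotent part'', so that $G'\cap A$ is central-by-finite in $G'$ and $G'$ is finite-by-nilpotent. I expect this base case to be handled along the lines of Theorem \ref{a}, together with a Baire-category reduction carried out inside $A$, and to be the point where the covering hypothesis does genuine work; the bookkeeping required to pass between ``an open subgroup behaves nilpotently'' and ``$G'$ is globally finite-by-nilpotent'' is where I anticipate the principal difficulty.
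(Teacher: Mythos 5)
Your reduction to the implication (1) $\Rightarrow$ (2)/(3) is correct and matches the paper, as does the initial Baire-category step producing a box $[aH,bH]\subseteq N_{i_0}$. But from there the proposal has genuine gaps, and one step as stated is actually false. The fatal one is the ``reassembly'' in your inductive step: knowing that $A=G'\cap\gamma_c(H_0')$ is abelian and that $G'/A$ is finite-by-nilpotent does \emph{not} imply $G'$ is finite-by-nilpotent --- your own example makes the point, since the infinite dihedral group is abelian-by-finite but not finite-by-nilpotent, and no form of Hall's theorem bridges this. What is needed is control of the \emph{action} of $G'$ on the abelian kernel, i.e.\ a statement of the form $[A,{}_t\,G']=1$, and obtaining it requires re-running the covering hypothesis on the kernel, not merely passing it to the quotient. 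A second structural problem is that your induction is on the class of $H_0'$, but $H_0'$ need not be open in $G'$ (it can be trivial while $G'$ is infinite --- again the dihedral pro-$2$ group), so bounding the class of $H_0'$ does not by itself track the structure of $G'$ at all; in particular your appeal to a Lemma-\ref{4a}-type argument would require $C_G(A)\cap G'$ to be open in $G'$, which fails in your setup since $C_G(A)$ only visibly contains $H_0'$.

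The paper avoids both problems by inserting two ingredients you are missing. First, besides the analogue of your ``box'' step (Lemma \ref{1a}), it proves that for \emph{each} generator $a$ of $G$ over $H$ there is an open normal $H_a$ with $[H_a,a]$ nilpotent (Lemma \ref{2a}); combining these and applying Schur's theorem shows that $G'$ itself is virtually nilpotent, so one may induct on the class of an open normal nilpotent subgroup $N\leq G'$ --- then $C_G(Z(N))\cap G'$ is automatically open in $G'$ and the coset-by-coset Baire argument (Lemma \ref{4a}) applies to $Z(N)$. Second, the engine that converts the covering hypothesis into nilpotent action is a purely finite-group lemma (Lemma \ref{3a}): if every \emph{commutator} $a$ of a finite group $G$ acting on an abelian group $M$ satisfies $[M,{}_k a]=1$, then $[M,{}_t G']=1$ for $t$ depending only on $k$ and $|G'|$; its proof is an induction on $|G'|$, treating the metabelian-type case via normalizing nilpotent subgroups and the perfect case ($G''=G'$) via coprime action. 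Your proposal has no analogue of this lemma, and the place where you expect it to be handled ``along the lines of Theorem \ref{a}'' cannot work, because Theorem \ref{a}'s abelian-covering argument gives central-by-finite conclusions, not the Engel-type conclusion needed here. In short: the landmarks you identify (Baire box, induction on class, the base case as locus of difficulty) are reasonable, but the two hardest components of the actual proof --- virtual nilpotency of $G'$ via Lemma \ref{2a} plus Schur, and the Engel-type Lemma \ref{3a}/\ref{4a} --- are absent, and the inductive step you propose in their place does not go through.
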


Remark that no analogue of the above theorem for abstract groups is known. We conjecture that if $G$ is an abstract group in which the commutators are covered by finitely many nilpotent subgroups, then $G'$ is finite-by-nilpotent.

Another natural conjecture (related to Theorem \ref{a}) is that if $G$ is a profinite group in which the commutators are covered by countably many abelian subgroups, then the second commutator subgroup $G''$ is finite. It seems however that the techniques employed in the present article are insufficient for dealing with the above conjectures.

\section{On groups covered by nilpotent subgroups}
The proof of Theorem \ref{a} is quite straightforward. The proof of Theorem \ref{b} will be somewhat more complicated. Throughout the article, whenever $G$ is a profinite group, we denote by $\langle X\rangle$ the subgroup of $G$ topologically generated by the subset $X$.

\begin{proof}[Proof of Theorem \ref{a}] It is immediate to see that a profinite group $G$ has finite commutator subgroup if and only if it is central-by-finite. Indeed, if $G$ is central-by-finite, then by Schur's theorem \cite[Theorem 4.12]{rob1} the commutator subgroup $G'$ is finite. On the other hand, if $G'$ is finite, $G$ possesses an open normal subgroup $N$ such that $N\cap G'=1$. It is clear that $N\leq Z(G)$ and so $G$ is central-by-finite.

Further, suppose that $Z(G)$ is open in $G$. Choose representatives $g_1,\dots,g_n$ of the cosets of $Z(G)$ in $G$. We observe that $G$ is a union of $n$ abelian subgroups $\langle g_i,Z(G)\rangle$. Thus, the proof of the theorem will be complete once we show that if $G$ is a union of countably many abelian subgroups, then $Z(G)$ is open in $G$.

Write $G=\cup G_i$, where $i=1,2,\dots$ and $G_i$ are closed abelian subgroups. By Baire Category Theorem \cite[p. 200]{kell} at least one of the subgroups $G_i$ is open. So we assume that $G_1$ is open. Choose $g_1,\dots,g_n\in G$ such that $G=\langle G_1,g_1,\dots,g_n\rangle$. For some $i\leq n$ we look at the coset $B=g_iG_1$. It is clear that $B$ is covered by the subsets $B\cap G_j$. The subsets $B\cap G_j$ are closed and so by Baire Category Theorem at least one of them has non-empty interior. Therefore there exist a positive integer $k\geq2$, an element $b\in B$, and an open subgroup $D_i$ such that $bD_i\subseteq G_k$. Since $G_k$ is abelian, it follows that so is the subgroup $\langle b,D_i\rangle$. Taking into account that $g_i\in bG_1$, we deduce that $g_i$ centralizes $D_i\cap G_1$. Such a subgroup $D_i$ can be chosen for each $i\leq n$. We see that $G_1\cap\bigcap_{1\leq i\leq n}D_i$ is an open central subgroup of $G$. It follows that $Z(G)$ is open in $G$, as required.
\end{proof}
 
As usual, if $X$ and $Y$ are subsets of a group $G$, we denote by $[X,Y]$ the subgroup generated by all commutators $[x,y]$, where $x\in X$ and $y\in Y$. For $k=1,2,\dots$ we write $X=[X,{}_0Y]$ and $[X,{}_kY]=[[X,{}_{k-1}Y],Y]$.
\begin{lemma}\label{3b} Let $G$ be a finite group acting on an abelian group $M$. Suppose that there exists an integer $k$ with the property that for every element $a\in G$ we have $[M,{}_ka]=1$. Then there exists a number $t$, depending only on $k$ and $|G|$, such that $[M,{}_tG]=1$.
\end{lemma}
\begin{proof} This is straighforward using the result of Crosby and Traustason \cite[Theorem 1]{crtr}.
\end{proof}

\begin{lemma}\label{4b} Let $G$ be a profinite group covered by countably many nilpotent subgroups. Assume that $N$ is a closed normal abelian subgroup of $G$ such that $C_G(N)$ is open. Then $N$ contains a subgroup $M$ such that \begin{enumerate}
\item $M$ is normal in $G$;
\item $M$ is open in $N$;
\item There exists a number $t$ such that $[M,{}_tG]=1$.
\end{enumerate}
\end{lemma}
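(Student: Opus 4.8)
The plan is to exploit the openness of $C_G(N)$ to reduce the conjugation action of $G$ on $N$ to an action of the finite quotient $F=G/C_G(N)$, and then to combine the nilpotent covering with Baire Category Theorem to bound the ``unipotency'' of this action on a suitable open subgroup of $N$, after which Lemma \ref{3b} completes the argument.

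I would begin by fixing coset representatives $g_1=1,g_2,\dots,g_n$ of $C_G(N)$ in $G$, so that $n=|F|$; note $N\le C_G(N)$ since $N$ is abelian. The decisive observation is that because $N$ is abelian and normal, every element of the coset $g_jN$ induces the same automorphism of $N$: if $b=g_jn_0$ with $n_0\in N$, then for $x\in N$ we have $x^b=(x^{g_j})^{n_0}=x^{g_j}$, as $x^{g_j}\in N$ and $N$ is abelian. Since the iterated commutators of any subgroup $D\le N$ with $b$ remain inside $N$, this propagates to give $[D,{}_cb]=[D,{}_cg_j]$ for every subgroup $D\le N$ and every $c$.

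Next I would apply Baire Category Theorem to each coset $g_jN$, a closed (hence Baire) subset of $G$ covered by the countably many closed sets $G_i\cap g_jN$. For each $j$ this yields a nilpotent covering subgroup $G_{i_j}$, an element $b_j\in g_jN$, and an open subgroup $D_j\le N$ with $b_jD_j\subseteq G_{i_j}$; hence $\langle b_j,D_j\rangle\le G_{i_j}$ is nilpotent, say of class $c_j$, giving $[D_j,{}_{c_j}b_j]=1$ and therefore $[D_j,{}_{c_j}g_j]=1$ by the observation above. Setting $D=\bigcap_jD_j$ (open in $N$) and $k=\max_jc_j$, I obtain $[D,{}_kg_j]=1$ for all $j$. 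To secure normality I pass to $M=\bigcap_jD^{g_j}$; because $C_G(N)$ centralizes $N$, the conjugate $D^x$ depends only on the coset $xC_G(N)$, so this is a finite intersection, $M$ is open in $N$ and normal in $G$, and $M\le D$ (as $g_1=1$) gives $[M,{}_kg_j]=1$. Since any $a\in G$ acts on $M\le N$ exactly as the appropriate $g_j$, we get $[M,{}_ka]=1$ for every $a\in G$; as this action factors through the finite group $F$, Lemma \ref{3b} supplies a number $t$ with $[M,{}_tG]=1$. This delivers conclusion (3), while normality and openness of $M$ give (1) and (2).

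I expect the main obstacle to be the transfer step: Baire Category Theorem a priori controls only the element $b_j$ in the coset $g_jN$, not $g_j$ itself. The identity $x^{b_j}=x^{g_j}$ on $N$, resting on the abelianness and normality of $N$, is precisely what converts nilpotency of $\langle b_j,D_j\rangle$ into the usable relation $[D_j,{}_{c_j}g_j]=1$; this is the heart of the proof. The remaining manipulations---intersecting to form $D$, taking the $G$-core to form $M$, and invoking Lemma \ref{3b}---are routine.
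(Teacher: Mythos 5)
Your proposal is correct and follows essentially the same route as the paper: Baire Category Theorem applied coset-by-coset over the finite quotient $G/C_G(N)$, nilpotency of the covering subgroup converted into a bounded Engel condition, and Lemma \ref{3b} applied to the induced action of the finite group $G/C_G(N)$ on the abelian group $M$. The only real difference is cosmetic: you run Baire inside the cosets $g_jN$, which forces the extra transfer observation $x^{b_j}=x^{g_j}$ on $N$ and a final normal-core step, whereas the paper runs it on the cosets $xC_G(N)$ themselves, so the element $a$ it finds already represents its coset and the subgroups $R_a=T\cap C_G(N)$ are open and normal in $G$, making $M=N\cap\bigcap_i R_{a_i}$ normal in $G$ automatically.
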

\begin{proof} Write $G=\cup G_i$, where $i=1,2,\dots$ and $G_i$ are closed nilpotent subgroups. Let $x\in G$ and $X=xC_G(N)$. Obviously, $X$ is closed and therefore compact. It is clear that $X$ is covered by the (closed) subsets $X\cap G_i$. By Baire category theorem,  at least one of these subsets contains a non-empty interior. Hence, there exist an open normal subgroup $T$ in $G$, an element $a\in X$, and an integer $j$  such that $X\cap aT$ is contained in $G_j$. Let $R=T\cap C_G(N)$. Since $aR$ is contained  in $G_j$, it follows that $\langle a,R\rangle\leq G_j$. Since $G_j$ is nilpotent, there exists a positive integer $k$ such that $[R,{}_ka]=1$. Thus, we have shown that for each coset $xC_G(N)$ there exist $a\in xC_G(N)$, a positive integer $k_a$, and an open normal subgroup $R_a$ such that $[R_a,{}_{k_a}a]=1$. We fix such representatives $a_1,a_2,\dots,a_s$, one in each coset of $C_G(N)$. Set $M=N\cap\bigcap_iR_{a_i}$. Being the intersection of $N$ with finitely many open normal subgroups, $M$ is normal in $G$ and open in $N$. Let $k_0$ be the maximum of the numbers $k_{a_i}$. The group $\bar{G}=G/C_G(N)$ naturally acts on $M$. Moreover, the action of $\bar{G}$ on $M$ verifies the condition $[M,{}_{k_0}\bar{a}]=1$ for every $\bar{a}\in\bar{G}$. By Lemma \ref{3b} there exists a number $t$ such that $[M,{}_t\bar{G}]=1$. Of course, this implies that $[M,{}_tG]=1$, as required.
\end{proof}

\begin{proof}[Proof of Theorem \ref{b}] In view of Hall's theorem \cite{hhh} it follows that $G$ is finite-by-nilpotent if and only if there exists a positive integer $m$ such that $Z_m(G)$ is open. Further, suppose that for some positive integer $m$ the subgroup $Z_m(G)$ is open in $G$. Choose representatives $g_1,\dots,g_n$ of the cosets of $Z_m(G)$ in $G$. We observe that $G$ is a union of $n$ nilpotent subgroups $\langle g_i,Z_m(G)\rangle$. Thus, the proof of the theorem will be complete once we show that if $G$ is a union of countably many nilpotent subgroups, then for some positive integer $m$ the subgroup $Z_m(G)$ is open in $G$.

So we assume that $G=\cup G_i$, where $i=1,2,\dots$ and $G_i$ are closed nilpotent subgroups. By Baire Category Theorem at least one of the subgroups $G_i$ is open. Therefore $G$ is virtually nilpotent. Let $c$ be the minimal nonnegative integer such that $G$ contains an open, nilpotent of class $c$, normal subgroup $N$. If $c=0$, then $G$ is finite and there is nothing to prove. If $c=1$, then $G$ is virtually abelian and the result is straightforward from Lemma \ref{4b}. Thus, suppose that $c\geq2$ and use induction on $c$. Put $Z=Z(N)$. It is clear that $C_G(Z)$ is open. Therefore, by Lemma \ref{4b}, there exists a number $t$ and a subgroup $M\leq Z$, which is normal in $G$ and open in $Z$, such that $[M,{}_tG]=1$. Since $Z/M$ is finite, the group $G$ has an open normal subgroup $K$ such that $K\cap Z\leq M$. Let $L=K\cap N$ and $F=M\cap L$. The nilpotency class of $L/F$ is at most $c-1$ and $[F,{}_tG]=1$. By induction, there exists an open normal subgroup $J$ such that $[J,{}_rG]\leq F$ for some positive integer $r$. Therefore $[J,{}_{r+t}G]=1$ and $J\leq Z_{r+t}(G)$. The proof is complete. 
\end{proof}

\section{On groups in which commutators are covered by nilpotent subgroups}
Our goal in the present section is to prove Theorem \ref{mainth}. Thus, we will work under the following hypothesis.
\begin{hypothesis}\label{aaa} Let $G$ be a profinite group in which the set of all commutators is contained in the union $\cup G_i$, where $i=1,2,\dots$ and $G_i$ are closed nilpotent subgroups.
\end{hypothesis}

\begin{lemma}\label{1a} Assume Hypothesis \ref{aaa}. Then $G$ has an open normal subgroup $H$ such that $H'$ is nilpotent.
\end{lemma}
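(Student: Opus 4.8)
The plan is to combine the Baire category theorem with elementary commutator calculus. The set $K$ of all commutators of $G$ is the image of the continuous map $G\times G\to G$, $(x,y)\mapsto[x,y]$, hence compact and closed; as a closed subspace of $G$ it is a Baire space. Writing $K=\bigcup_i(K\cap G_i)$ as a countable union of closed sets, I would invoke the Baire category theorem to find an index $i_0$ for which $K\cap G_{i_0}$ has nonempty interior in $K$. Since the open normal subgroups form a base of neighbourhoods of $1$, this yields a commutator $c_0=[a,b]$ and an open normal subgroup $T$ such that every commutator lying in the coset $c_0T$ belongs to the nilpotent subgroup $P:=G_{i_0}$. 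I would then take $H=T$ and aim to prove $T'\subseteq P$.

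The first observation I would record is that, because $T$ is normal, any $x\in aT$ and $y\in bT$ satisfy $[x,y]\equiv[a,b]=c_0\pmod T$, so $[x,y]\in c_0T$ and hence $[x,y]\in P$; in particular $c_0^{\,g}\in P$ for every $g$ with trivial image in $G/T$. Next, using the standard identities $[x,yz]=[x,z][x,y]^z$ and $[xy,z]=[x,z]^y[y,z]$ to expand $[a,bt]$ and $[at,b]$ and cancelling the factor $c_0^{\,t}\in P$, I would deduce $[a,t]\in P$ and $[t,b]\in P$ for all $t\in T$, that is, $[a,T]\subseteq P$ and $[T,b]\subseteq P$. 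The same identities show that these two subgroups are invariant under conjugation by $T$, so all of their elements remain inside $P$ after conjugating by any element of $T$.

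The heart of the argument is the expansion, for $s,t\in T$, of the commutator $[as,bt]$, which lies in $P$ since $as\in aT$ and $bt\in bT$. Expanding both entries gives $[as,bt]=[a,t]^{s}\,[s,t]\,[a,b]^{st}\,[s,b]^{t}$, whence $[s,t]=[a,t]^{-s}\,[as,bt]\,[s,b]^{-t}\,[a,b]^{-st}$. Each factor on the right lies in $P$: the terms $[a,t]^{s}$ and $[s,b]^{t}$ because $[a,T]$ and $[T,b]$ are $T$-invariant subgroups of $P$, the term $[as,bt]$ by the choice of coset, and $[a,b]^{st}=c_0^{\,st}$ because it is a commutator congruent to $c_0$ modulo $T$. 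As $P$ is a subgroup, it follows that $[s,t]\in P$ for all $s,t\in T$, and therefore $T'=\overline{\langle[s,t]:s,t\in T\rangle}\subseteq P$. Being a closed subgroup of the nilpotent group $P$, the subgroup $T'$ is nilpotent, while $T$ is open and normal in $G$, so $H=T$ is the required subgroup.

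The point I expect to require the most care is the bookkeeping in the last step: one must arrange the expansion so that every factor is trapped in the single nilpotent subgroup $P$. This succeeds precisely because conjugation by elements of $T$ fixes the coset $c_0T$ and preserves the $T$-invariant subgroups $[a,T]$ and $[T,b]$; absent these invariance properties the conjugates $[a,t]^{s}$, $[s,b]^{t}$ and $c_0^{\,st}$ could escape $P$, and this is the only genuine obstacle in the proof.
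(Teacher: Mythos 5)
Your proof is correct, and after the Baire step it follows a genuinely different route from the paper's. The paper applies Baire to the sets $S_i=\{(x,y)\in G\times G \mid [x,y]\in G_i\}$ in $G\times G$, obtaining an open normal subgroup $H$ and elements $a,b$ with $[ah_1,bh_2]\in G_j$ for all $h_1,h_2\in H$; it then argues indirectly: it forms the closed subgroup $K\le G_j$ generated by these commutators, sets $D=K\cap H$, takes the product $D_1D_2\cdots D_r$ of the finitely many $G$-conjugates of $D$ (nilpotent, being a product of finitely many normal nilpotent subgroups of $H$), and shows by passing to the quotient $G/D_1\cdots D_r$ that $H'\le D_1\cdots D_r$, whence $H'$ is nilpotent. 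You instead apply Baire to the set of commutators itself (exactly the device the paper uses later, in the proof of Lemma~\ref{4a}), reaching the same configuration --- all commutators in the coset $c_0T$ lie in a single nilpotent $P=G_{i_0}$ --- and then you trap $T'$ inside $P$ by direct commutator calculus. I checked the computations: the expansion $[as,bt]=[a,t]^{s}[s,t][a,b]^{st}[s,b]^{t}$ is correct; the memberships $[a,T]\le P$, $[T,b]\le P$, and $c_0^{\,g}\in P$ for $g\in T$ are justified as you state (each such element is, or differs by elements of $P$ from, a commutator congruent to $c_0$ modulo $T$); and the $T$-invariance of $[a,T]$ and $[T,b]$ follows from $[a,t]^{s}=[a,s]^{-1}[a,ts]$ and $[t,b]^{s}=[ts,b][s,b]^{-1}$. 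Your conclusion is in fact slightly sharper than the paper's: $H'=T'$ lands inside one of the covering subgroups $G_{i_0}$, rather than merely inside a nilpotent normal subgroup manufactured from conjugates of a piece of $G_j$, and your argument dispenses with Fitting's theorem and the quotient argument. What the paper's version buys is economy of computation: its only hidden calculation is that $[ah_1,bh_2]=[a,b]$ for all $h_1,h_2\in H$ forces $[H,a]=[H,b]=H'=1$, whereas the entire weight of your proof rests on getting the factor-by-factor containments right --- which you did.
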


\begin{proof}
For each positive integer $i$ set \[S_i=\{(x,y)\in G\times G \mid [x,y]\in G_i \}.\]
Note that the sets $S_i$ are closed in $G\times G$ and cover the whole group $G\times G$. By Baire category theorem at least one of these sets contains a non-empty interior. Hence, there exist an open normal subgroup $H$ in $G$,   elements $a,b \in G$, and an integer $j$  such that $[ah_1,bh_2]\in G_j$ for any choice of $h_1,h_2\in H$.

Let $K$ be the closed subgroup of $G$ generated by all commutators of the form $[ah_1,bh_2]$, where $h_1,h_2\in H$. Note that $K\le G_j$ and that $H$ normalizes $K$. Since $G_j$ is nilpotent, so is $K$. Let $D=K\cap H$. Then $D$ is a normal nilpotent subgroup of $H$ and the normalizer of $D$ in $G$ has finite index. Therefore there are only finitely many conjugates of $D$ in $G$. Let $D =D_1,D_2,\ldots, D_r$ be all these conjugates. All of them are normal in $H$ and so their product $D_1D_2\cdots D_r$ is nilpotent. Of course, the product $D_1D_2\cdots D_r$ is a closed normal subgroup of $G$. Thus, we can examine the quotient $G/D_1D_2\cdots D_r$.

Suppose that $D=1$. This implies that $[ah_1,bh_2]=[a,b]$ for any $h_1,h_2\in H$. Obviously this happens if and only if $[H,a]=[H,b]=H'=1$. Therefore $H'\leq D_1D_2\cdots D_r$ and the lemma follows.
\end{proof}
\begin{lemma}\label{2a} Assume Hypothesis \ref{aaa}. Then for every element $a\in G$ there exists an open normal subgroup $H_a$ such that $[H_a,a]$ is nilpotent.
\end{lemma}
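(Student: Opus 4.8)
The natural first move is to repeat the proof of Lemma \ref{1a} with the second entry of the commutator frozen. Fix $a\in G$ and set $T_i=\{x\in G\mid[x,a]\in G_i\}$ for each $i$; these sets are closed and cover $G$, so by the Baire category theorem some $T_j$ has non-empty interior. Hence there are an open normal subgroup $H\trianglelefteq G$, an element $c\in G$, and an index $j$ with $[ch,a]\in G_j$ for all $h\in H$. Setting $K=\langle[ch,a]\mid h\in H\rangle\le G_j$ gives a nilpotent subgroup, and the identity $[ch,a]=[c,a]^{h}[h,a]$ rewrites as $[h,a]=([c,a]^{-1})^{h}[ch,a]$, so that every generator of $[H,a]$ is a product of the conjugate $([c,a]^{-1})^{h}$ with an element of $K$.

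Here lies the difficulty. In Lemma \ref{1a} both entries of the commutator carried a free factor from $H$, which made the relevant set of commutators invariant under conjugation by $H$; one could then pass to $K\cap H$ and form the product of its finitely many conjugates. With $a$ frozen this symmetry is lost: conjugating $[ch,a]$ by $h'\in H$ pushes $a$ into the coset $aH$ and creates an uncontrolled commutator, so $H$ need not normalise $K$, and the factor $([c,a]^{-1})^{h}$ obstructs placing $[H,a]$ inside a single nilpotent subgroup. I therefore expect the straightforward adaptation of Lemma \ref{1a} to fail and would switch strategy.

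The plan instead is to combine Lemma \ref{1a} with an induction on the nilpotency class $d$ of $H_0'$, where $H_0\trianglelefteq G$ is an open normal subgroup with $H_0'$ nilpotent, furnished by Lemma \ref{1a}. If $d=0$ then $H_0$ is abelian and $[H_0,a]\le H_0$ is abelian, hence nilpotent. For the inductive step let $W=\gamma_d(H_0')$, the last non-trivial term of the lower central series of $H_0'$: it is abelian, characteristic in $H_0$ and so normal in $G$, and it is centralised by $H_0'$. In $G/W$ the derived subgroup of the image of $H_0$ has class $d-1$, so induction yields an open normal subgroup $H_1$ with $[H_1,a]$ nilpotent modulo $W$. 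The key claim is that one can then find an open normal subgroup $H_a\le H_1$ for which $[H_a,a]$ centralises $W$. Granting this, $[H_a,a]\cap W$ is a central subgroup of $[H_a,a]$ while $[H_a,a]/([H_a,a]\cap W)$ is nilpotent, and a central-by-nilpotent group is nilpotent; this closes the induction and gives the lemma.

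The decisive and, I expect, hardest point is the claim that $[H_a,a]$ can be made to centralise $W$. Since $H_0'$ centralises $W$, the group $H_0$ acts on $W$ through the abelian quotient $H_0/C_{H_0}(W)$, whence the image of $[H_0,a]$ in $\aut(W)$ is abelian; what must be shown is that this image is \emph{finite}. Once it is, the fixed-point subgroup of $a$ acting on $W$ is open, and its pull-back (intersected with $H_1$ and replaced by its normal core) furnishes the desired $H_a$. To prove finiteness I would re-invoke the covering hypothesis: each commutator $[h,a]$ lies in some nilpotent $G_i$, and one would show that it consequently acts on the normal abelian group $W$ as a unipotent automorphism of bounded class, so that Lemma \ref{3b} upgrades this pointwise bound on the action of individual commutators to the uniform statement that $[H_0,a]$ induces only finitely many automorphisms of $W$. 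Carrying out this analysis of the action on $W$ — where the hypothesis that commutators are covered by nilpotent subgroups must do its real work — is the essential technical content of the proof.
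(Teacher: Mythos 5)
There is a genuine gap, and it occurs at the very first fork in your argument: the ``difficulty'' that made you abandon the direct approach is illusory, an artifact of the order in which you wrote the coset elements. You wrote the generic element of the good coset as $ch$ and got the identity $[ch,a]=[c,a]^h[h,a]$, where the conjugate $[c,a]^h$ varies with $h$ and is not known to lie in $G_j$. But since $H$ is normal, $cH=Hc$, so you may equally write the elements as $hc$ and use $[hc,a]=[h,a]^c[c,a]$. Taking $h=1$ gives $[c,a]\in G_j$, and then $[h,a]^c=[hc,a][c,a]^{-1}\in G_j$ for every $h\in H$, whence $[H,a]\leq G_j^{c^{-1}}$. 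A conjugate of a nilpotent group is nilpotent, so $H_a=H$ works. This is exactly the paper's proof. The point you missed is that one does not need $[H,a]$ to be normalised by $H$, nor to lie in one of the original covering subgroups: it suffices that it lie in a \emph{fixed conjugate} of one of them, and with the ordering $hc$ the conjugating element is the constant $c$ rather than the variable $h$.

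The fallback route you then develop does not close the gap, because its ``decisive'' claim is unsupported at precisely the point where support is needed. From $[h,a]\in G_i$ with $G_i$ nilpotent you cannot conclude that $[h,a]$ acts unipotently (of bounded class, or at all) on $W$: the covering subgroup $G_i$ bears no relation to $W$, and nilpotency of $G_i$ constrains commutation only \emph{inside} $G_i$. Compare Lemma \ref{4a} of the paper, where extracting such unipotence requires a separate Baire category argument on cosets of the centraliser, engineered so that a whole neighbourhood $aR$ with $R\leq C_G(N)$ lands in a single $G_j$; only then does $\langle a^R\rangle\leq G_j$ yield $[R,{}_k a]=1$. Moreover, your appeal to Lemma \ref{3b} is backwards: that lemma takes a \emph{finite} group acting on an abelian group, with each element acting with bounded unipotence degree, and concludes uniform nilpotence of the action; it cannot be used to establish the finiteness of the image of $[H_0,a]$ in $\aut(W)$, which is what your argument still owes. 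So the inductive skeleton (central-by-nilpotent is nilpotent, induction on the class of $H_0'$) is sound, but the essential technical content you defer is both missing and, as sketched, circular.
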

\begin{proof} Fixed an element $a\in G$, for each positive integer $i$ let \[S_i=\{x\in G\mid [x,a]\in G_i\}.\]
Note that the sets $S_i$ are closed in $G$ and cover the whole group $G$. By Baire category theorem at least one of these sets contains a non-empty interior. Hence, there exist an open normal subgroup $H$ in $G$, an element $b\in G$, and an integer $j$  such that $[hb,a]\in G_j$ for any $h\in H$. We have $[hb,a]=[h,a]^b[b,a]$. Since $[b,a]\in G_j$, we conclude that $[h,a]^b\in G_j$ for any  $h\in H$. Therefore $[H,a]\leq G_j^{b^{-1}}$, which is nilpotent.
\end{proof}

\begin{lemma}\label{3a} Let $G$ be a finite group acting on an abelian group $M$. Suppose that there exists an integer $k$ such that for every commutator $a\in G$ we have $[M,{}_ka]=1$. Then there exists a number $t$, depending only on $k$ and $|G'|$, such that $[M,{}_tG']=1$.
\end{lemma}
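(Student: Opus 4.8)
The plan is to deduce Lemma~\ref{3a} from Lemma~\ref{3b}, applied to the group $G'$ rather than to $G$. Lemma~\ref{3b} requires a bounded Engel condition on \emph{every} element of the acting group, while our hypothesis only controls the commutators; the whole difficulty is therefore to upgrade the statement ``$[M,{}_k a]=1$ for every commutator $a$'' to ``$[M,{}_{k'} g]=1$ for every $g\in G'$'', with $k'$ bounded in terms of $k$ and $|G'|$. Granting such a $k'$, Lemma~\ref{3b} applied to $G'$ (whose order is $|G'|$) with Engel degree $k'$ produces $t=t(k',|G'|)=t(k,|G'|)$ with $[M,{}_t G']=1$, which is exactly the assertion.

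To produce $k'$ I would first carry out a reduction to the case where $M$ is a vector space over a prime field. The condition $[M,{}_k a]=1$ says precisely that $(a-1)^k=0$ in $\End_{\Z}(M)$, i.e. each commutator acts as a genuine unipotent operator. After tensoring with $\mathbb{Q}$ a finite-order unipotent operator is the identity, so every commutator, and hence all of $G'$, acts trivially on the torsion-free part; thus $[M,G']$ is torsion and it suffices to treat the primary components $M_p$. On each $M_p$ the exact relation $(a-1)^k=0$ bounds, in terms of $k$, the exponent of the sections on which a commutator acts nontrivially, which lets one pass to the layer $M_p/pM_p$ and lift back with a bound independent of the (possibly unbounded) exponent of $M_p$. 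This reduces matters to the case $M=V$, a vector space over $\FF_p$.

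Over $\FF_p$ the decisive structural fact is that \emph{if every commutator of a finite group acts unipotently on $V$, then $G'$ acts as a $p$-group}; equivalently, as a statement about abstract groups, if every commutator of a finite group $G$ is a $p$-element then $G'$ is a $p$-group. I would prove this by choosing a counterexample $G$ of least order and a minimal normal subgroup $N$. Minimality forces $(G/N)'=G'N/N$ to be a $p$-group, and since $G'$ is not a $p$-group one gets $N\le G'$, with $N$ not a $p$-group. If $N$ is an elementary abelian $q$-group ($q\ne p$), then for $v\in N$ and $g\in G$ the commutator $[g,v]=(v^g)^{-1}v$ lies in $N$ and is a $p$-element, hence is trivial; thus $N\le Z(G)$, so $G'$ is nilpotent and $G'=N\times P$ with $P=O_p(G')$ characteristic in $G'$ and normal in $G$. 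Passing to $G/P$ makes the commutator subgroup equal to $N$, a nontrivial $q$-group all of whose commutators are trivial, which is absurd. If instead $N$ is nonabelian, then $N$ is perfect and, by the theorem that every element of a finite nonabelian simple group is a commutator (Ore's conjecture, now a theorem), $N$ contains commutators of order divisible by primes other than $p$, again contradicting the hypothesis. Hence no minimal counterexample exists. Once $G'$ acts as a $p$-group $P$ with $|P|\le|G'|$, every $g\in G'$ satisfies $(g-1)^{|G'|}=0$ on $V$, so one may take $k'=|G'|$, and Lemma~\ref{3b} completes the argument.

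The main obstacle is precisely this last structural step. Products of unipotent operators need not be unipotent (as in $\psl$- or $\ssl_2$-type representations), so one cannot argue element-by-element that $G'$ acts unipotently; the conjugacy-closed, \emph{global} nature of the hypothesis—every commutator, not just a generating set—is what must be exploited, and this is what forces the appeal to the fact that in a nonabelian simple group every element is a commutator. I expect the reduction to prime-field modules to be technical but routine, whereas the passage from ``commutators are $p$-elements'' to ``$G'$ is a $p$-group'' carries the real weight of the proof.
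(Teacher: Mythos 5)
Your proposal is essentially correct, but it takes a genuinely different route from the paper's. The paper never reduces to characteristic $p$ and never invokes Lemma \ref{3b}: it first reduces to $M$ a \emph{finite} abelian $p$-group (via $M=\langle x^G\rangle$ and residual finiteness), then runs an induction on $|G'|$, handling the case $G''\neq G'$ with Fitting's theorem (in the metabelian quotient the commutators $a_i$ commute, so the class-$\le k$ subgroups $\langle a_i,M_{s-1}\rangle$ normalize one another and their product has class $\le kr$), and the perfect case $G''=G'$ by coprime action, using exactly the structural fact you prove --- all commutators $p$-elements $\Rightarrow$ $G'$ a $p$-group --- which the paper cites as Lemma 3.2 of \cite{proc99} rather than proving. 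Your route replaces the induction by one application of that structural fact plus Lemma \ref{3b} with $k'=|G'|$, which is conceptually cleaner. Two remarks: your appeal to Ore's conjecture is a very heavy hammer --- the focal subgroup theorem gives the structural claim in three lines (for $q\neq p$ and $Q$ a Sylow $q$-subgroup, $Q\cap G'$ is generated by commutators lying in $Q$, which are simultaneously $p$- and $q$-elements, hence trivial) --- and in your nonabelian minimal-counterexample case you only need \emph{some} commutator of non-$p$-power order in a simple group, not that every element is a commutator.

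The one place where your proof is not yet a proof is the step you call ``technical but routine'': lifting from $M_p/pM_p$ back to $M_p$. The naive lift fails --- knowing $[M_p,{}_t G']\le pM_p$ and iterating through the layers $p^iM_p/p^{i+1}M_p$ gives only $[M_p,{}_{te}G']=1$ with $e$ the exponent of $M_p$, which is unbounded; nor can Lemma \ref{3b} be applied to $M_p$ itself, since a $p$-element of bounded order can have unbounded Engel length (the coordinate swap $\sigma$ on $(\Z/2^n\Z)^2$ has order $2$ but $(\sigma-1)^{n}\neq 0$). What rescues your plan is a precise form of the fact you gesture at: if $a$ is a commutator of order $N$, then $0=a^N-1=N(a-1)+(a-1)^2u$ with $u$ a polynomial in $a$, whence $N^{k-1}(a-1)=(a-1)^ku^{k-1}=0$; so $[M,a]$ has exponent dividing $N^{k-1}$. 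Since the set of commutators is conjugation-closed and generates $G'$, $[M,G']$ is the sum of the subgroups $[M,a]$ and therefore has exponent dividing $\exp(G')^{k-1}\le |G'|^{k-1}$. This gives a $G$-invariant filtration of $[M_p,G']$ with at most $(k-1)\log_2|G'|$ elementary abelian layers, and only then does your $\FF_p$-argument (structural claim plus Lemma \ref{3b}) finish, with a total bound depending on $k$ and $|G'|$. Note in particular that this bound depends on $|G'|$ as well, not ``in terms of $k$'' alone as your sketch asserts --- which is all the lemma requires, but the dependence should be stated.
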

\begin{proof} Choose $x\in M$. The subgroup $\langle x^G\rangle$ is $G$-invariant. It is sufficient to show that there exists a number $t$, depending only on $k$ and $|G'|$ (and not on the choice of $x\in M$), such that $[\langle x^G\rangle,{}_tG']=1$. Therefore we can assume that $M=\langle x^G\rangle$. Now $M$ is finitely generated and hence residually finite. It is sufficient to show that there exists a number $t$, depending only on $k$ and $|G'|$, such that $[Q,{}_tG']=1$ whenever $Q$ is a finite $G$-invariant quotient of $M$. Therefore we can assume that $M$ is finite. Further, we note that $G$ naturally acts on each Sylow subgroup of $M$. It is sufficient to prove that the action of $G$ on each Sylow subgroup of $M$ satisfies the conclusion of the lemma. Hence, without loss of generality we assume that $M$ is a finite $p$-group for some prime $p$.

 The lemma will be proved by induction on $|G'|$. Since there is nothing to prove when $G'=1$, we assume that $G'\neq1$. Suppose that $G''\neq G'$. By induction, there exists a $(k,|G''|)$-bounded number $s$ such that $[M,{}_sG'']=1$. We allow the case where $G''=1$ and $s=1$, and we assume that $s$ is the minimal number with the property that $[M,{}_sG'']=1$. Set $M_i=[M,{}_iG'']$ for $i=0,1,\dots$. Of course, we have $M_0=M$ and $M_{i+1}=[M_i,G'']$ for $i=0,1,\dots$. Let $\bar{G}=G/C_G(M_{s-1})$ and we remark that $\bar{G}$ is metabelian since $G''\leq C_G(M_{s-1})$. The group $\bar{G}$ naturally acts on $M_{s-1}$. Let $a_1,a_2,\dots,a_r$ be the commutators in $\bar{G}$. Of course their number $r$ does not exceed $|G'|$. For $i=1,2,\dots,r$ we let $H_i=\langle a_i,M_{s-1}\rangle$. Since for every commutator $a\in G$ we have $[M,{}_ka]=1$, it follows that each subgroup $H_i$ is nilpotent of class at most $k$. Because $\bar{G}$ is metabelian, the subgroups $H_i$ normalize each other and therefore the subgroup $\langle a_1,a_2,\dots,a_r,M_{s-1}\rangle$ is nilpotent of class at most $kr$. In particular, $[M_{s-1},{}_{kr}\bar{G}']=1$. Of course, it follows that $[M_{s-1},{}_{kr}G']=1$.

Now we look at the natural action of $G$ on $M/M_{s-1}$. If $s\geq2$, we repeat the above argument and obtain that $[M_{s-2}/M_{s-1},{}_{kr}G']=1$. In other words, $[M_{s-2},{}_{kr}G']\leq M_{s-1}$. Hence, $[M_{s-2},{}_{2kr}G']=1$. Next, we look at the natural action of $G$ on $M/M_{s-2}$ and so on. Eventually we obtain that $[M,{}_{skr}G']=1$. Thus, in the case where $G''\neq G'$ the result follows. Now assume that $G''=G'$.

Let $K$ be the subgroup of $G$ generated by all commutators whose orders are not divisible by $p$. Then all commutators in $G/K$ have $p$-power order and it follows that $G'/K$ is a $p$-group (see for example Lemma 3.2 in \cite{proc99}). Since $G''=G'$, we conclude that $G'=K$. Choose a commutator $a\in G$ of order not divisible by $p$. The well-known property of coprime automorphisms (see \cite[Theorem 5.3.2]{gore}) shows that $[M,{}_ka]=1$ if and only if $[M,a]=1$. Taking into account that $G'$ is generated by commutators of order not divisible by $p$ we deduce that $[M,G']=1$. The proof is now complete.
\end{proof}
\begin{lemma}\label{4a} Assume Hypothesis \ref{aaa} and suppose that $N$ is a closed normal abelian subgroup of $G$ such that $C_G(N)\cap G'$ is open in $G'$. Then $N$ contains a subgroup $M$ such that \begin{enumerate}
\item $M$ is normal in $G$;
\item $M$ is open in $N$;
\item There exists a number $t$ such that $[M,{}_tG']=1$.
\end{enumerate}
\end{lemma}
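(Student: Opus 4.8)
The plan is to mirror the proof of Lemma~\ref{4b}, but with $G$ systematically replaced by $G'$ and with Lemma~\ref{3a} playing the role of Lemma~\ref{3b}. Put $\og=G/C_G(N)$, so that $\og$ acts on $N$; since $C_G(N)\cap G'$ is open in $G'$, the commutator subgroup $\og'\cong G'/(G'\cap C_G(N))$ is finite, and I write $d=|\og'|$. In particular $\og$ has only finitely many commutators, each of finite order dividing $d$. The aim is to produce a single open, $G$-invariant subgroup $M\le N$ together with an integer $k$ such that $[M,{}_k\bar c]=1$ for every commutator $\bar c$ of $\og$, and then to feed this into Lemma~\ref{3a} through the finite quotients of $M$.

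The core is the following claim: for each commutator $\bar c=[\bar x,\bar y]$ of $\og$ there is an open $G$-invariant subgroup $M_{\bar c}\le N$ and an integer $k_{\bar c}$ with $[M_{\bar c},{}_{k_{\bar c}}\bar c]=1$. To prove it I would lift $\bar c$ to a commutator $c=[x,y]$ of $G$ and exploit the fact that the whole coset $c\,[N,c]$ consists of commutators: indeed $c^{\,n}=[x^{n},y^{n}]$ is a commutator for every $n\in N$, and since $N$ is abelian and normal the map $n\mapsto[n,c]$ is a homomorphism of $N$ onto the closed subgroup $[N,c]$, whence $\{c^{\,n}\mid n\in N\}=c\,[N,c]$. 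Applying the Baire category theorem to the compact set $c\,[N,c]=\bigcup_i\bigl(c\,[N,c]\cap G_i\bigr)$ yields a nilpotent $G_j$, an element $c_1\in c\,[N,c]$, and a subgroup $D$ open in $[N,c]$ with $c_1D\subseteq G_j$; hence $\langle c_1,D\rangle$ is nilpotent and $[D,{}_k c_1]=1$ for some $k$. As $c_1=cn_0$ with $n_0\in[N,c]\le N$ and $D\le N$ is abelian, $c_1$ and $c$ induce the same action on $D$, so $[D,{}_k c]=1$.

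Next I would upgrade control on $[N,c]$ to control on an open subgroup of $N$. The map $(\bar c-1)^{k}\colon[N,c]\to[N,c]$, i.e. $u\mapsto[u,{}_k c]$, is a homomorphism vanishing on the open subgroup $D$, so its image $[N,{}_{k+1}c]$ is finite. Consequently the descending chain $[N,{}_j c]$ for $j\ge k+1$ is a chain of finite groups and stabilises at a finite subgroup $N_\infty=\bigcap_j[N,{}_j c]$, say $N_\infty=[N,{}_J c]$. Choosing an open $G$-invariant subgroup $M_{\bar c}\le N$ with $M_{\bar c}\cap N_\infty=1$ (possible since $N_\infty$ is finite and the open $G$-invariant subgroups of $N$ form a base at $1$) and using that $M_{\bar c}$ is $\bar c$-invariant, one gets $[M_{\bar c},{}_J\bar c]\le N_\infty\cap M_{\bar c}=1$, which establishes the claim with $k_{\bar c}=J$.

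Finally I would set $M=\bigcap_{\bar c}M_{\bar c}$, an open $G$-invariant subgroup of $N$, and $k=\max_{\bar c}k_{\bar c}$, so that $[M,{}_k\bar c]=1$ for every commutator $\bar c$ of $\og$. Passing to the finite $G$-invariant quotients $M/M_\lambda$ with $\bigcap_\lambda M_\lambda=1$, the finite group $\hat G_\lambda=G/C_G(M/M_\lambda)$ acts on $M/M_\lambda$; since $C_G(M/M_\lambda)\supseteq C_G(N)$ we have $|\hat G_\lambda'|\le d$, and every commutator of $\hat G_\lambda$ is the image of some $c$ with $[M,{}_k c]=1$, so the hypothesis of Lemma~\ref{3a} holds with this same $k$. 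Lemma~\ref{3a} then supplies a number $t=t(k,d)$, independent of $\lambda$, with $[M/M_\lambda,{}_t\hat G_\lambda']=1$, that is $[M,{}_t G']\le M_\lambda$; letting $\lambda$ vary gives $[M,{}_t G']=1$, as required. The main obstacle is the claim, specifically the passage from a nilpotent action on the possibly small subgroup $[N,c]$ to a nilpotent action on an open subgroup of $N$: the decisive point is that the Baire argument forces $[N,{}_{k+1}c]$ to be finite, after which the finite-order action of $\bar c$ is killed on an open subgroup avoiding $N_\infty$. A secondary point requiring care is the uniformity of $t$, which is guaranteed precisely because $|\hat G_\lambda'|\le d=|\og'|$ for all $\lambda$.
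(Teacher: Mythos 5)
Your proof is correct, and while it follows the paper's overall skeleton --- reduce to the finitely many commutator classes modulo $C_G(N)$, produce a uniform integer $k$ and an open $G$-invariant $M\le N$ with $[M,{}_k\bar c]=1$ for every commutator $\bar c$ of $G/C_G(N)$, then feed this into Lemma~\ref{3a} --- the crucial Baire step is genuinely different. The paper applies Baire to the set $X$ of \emph{all} commutators lying in a fixed coset $xC_G(N)$; this yields a commutator $a\in X$ and an open normal $T\le G$ with $X\cap aT\subseteq G_j$, and since $R=T\cap C_G(N)$ is normal in $G$, every conjugate $a^r$ with $r\in R$ is again a commutator lying in $X\cap aT$, so $\langle a^R\rangle\le G_j$ and $[R,{}_ka]=1$; because $N\le C_G(N)$ (as $N$ is abelian), intersecting $N$ with the finitely many subgroups $R_{a_i}$ immediately gives a subgroup open in $N$, so no further work is needed. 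You instead apply Baire to the $N$-conjugacy class $c[N,c]$ of a single lifted commutator, which only gives nilpotent action of $c$ on a subgroup $D$ open in $[N,c]$ --- a subgroup possibly far from open in $N$ --- and your ``upgrade'' step is exactly what repairs this deficit: since $u\mapsto[u,{}_kc]$ is a homomorphism on $N$ killing the finite-index subgroup $D$ of $[N,c]$, the group $[N,{}_{k+1}c]$ is finite, the descending chain $[N,{}_jc]$ stabilises at a finite $N_\infty$, and any open $G$-invariant subgroup of $N$ avoiding $N_\infty$ is annihilated by $J$ iterations of $c$. This stabilisation argument is an extra cost the paper avoids, but your route buys two things: every element of your Baire space is visibly a commutator (namely $c^n=[x^n,y^n]$), and your closing reduction through the finite quotients $M/M_\lambda$, with the uniform bound $|\hat G_\lambda'|\le d$ guaranteeing a single $t$, makes explicit what the paper compresses into the phrase ``a profinite version of Lemma~\ref{3a}''.
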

\begin{proof} Let $x\in G$ and let $X$ be the set of all commutators contained in the coset $xC_G(N)$. Suppose that $X$ is non-empty. Obviously, the set $X$ is closed and therefore compact. It is clear that $X$ is covered by the (closed) subsets $X\cap G_i$. By Baire category theorem,  at least one of these subsets contains a non-empty interior. Hence, there exist an open normal subgroup $T$ in $G$, an element $a\in X$, and an integer $j$  such that all commutators contained in $X\cap aT$ belong to $G_j$. Let $R=T\cap C_G(N)$. Since all commutators contained in $aR$ belong to $G_j$, it follows that $\langle a^R\rangle\leq G_j$. Since $G_j$ is nilpotent, there exists a positive integer $k$ such that $[R,{}_ka]=1$. Thus, we have shown that whenever the coset $xC_G(N)$ contains commutators, there exists a commutator $a\in xC_G(N)$, a positive integer $k_a$, and an open normal subgroup $R_a$ such that $[R_a,{}_{k_a}a]=1$. We fix such commutators $a_1,a_2,\dots,a_s$, one in each coset of $C_G(N)$ containing commutators. Set $M=N\cap\bigcap_iR_{a_i}$. Being the intersection of $N$ with finitely many open normal subgroups, $M$ is normal in $G$ and open in $N$. Let $k_0$ be the maximum of the numbers $k_{a_i}$. The group $\bar{G}=G/C_G(N)$ naturally acts on $M$. Further, the derived group of $\bar{G}$ is finite and the action of $\bar{G}$ on $M$ verifies the condition $[M,{}_{k_0}\bar{a}]=1$ whenever $\bar{a}$ is a commutator in $\bar{G}$. By a profinite version of Lemma \ref{3a} there exists a number $t$ such that $[M,{}_t\bar{G}']=1$. Of course, this implies that $[M,{}_tG']=1$, as required.
\end{proof}

Now we are ready to prove Theorem \ref{mainth}.
\begin{proof}[Proof of Theorem \ref{mainth}] We know from the proof of Theorem \ref{b} that $G'$ is finite-by-nilpotent if and only if there exists a positive integer $m$ such that $Z_m(G')$ is open in $G'$ and that $G'$ is a union of finitely many nilpotent subgroups whenever $Z_m(G')$ is open in $G'$. Therefore we only have to show that under Hypothesis \ref{aaa} there exists a positive integer $m$ such that $Z_m(G')$ is open in $G'$.

Thus, assume Hypothesis \ref{aaa}. By Lemma \ref{1a} $G$ possesses an open normal subgroup $H$ such that $H'$ is nilpotent. Let $a_1,a_2,\dots,a_s$ be elements of $G$ such that $G=\langle H,a_1,a_2,\dots,a_s\rangle$. By Lemma \ref{2a} for every $i=1,\dots,s$ there exists an open normal subgroup $H_i$ such that $[H_i,a_i]$ is nilpotent. Let $U$ be the product of $H'$ and all subgroups $[H_i,a_i]$ for $i=1,\dots,s$. Then $U$ is nilpotent. Let $V$ be the intersection of $H$ and the subgroups $H_i$ for $i=1,\dots,s$. Thus, $V$ is an open normal subgroup and it is easy to see that $V/U$ is centralized by $H$ and the generators $a_1,a_2,\dots,a_s$. We conclude that $V/U$ is central in $G/U$. By Schur's theorem $G'/U$ is finite.

Thus, we have shown that $G'$ is virtually nilpotent. Let $c$ be the minimal nonnegative integer such that $G'$ contains an open, nilpotent of class $c$, subgroup $N$ which is normal in $G$. If $c=0$, then $G'$ is finite and there is nothing to prove. If $c=1$, then $G'$ is virtually abelian and the result is straightforward from Lemma \ref{4a}. Thus, assume that $c\geq2$ and use induction on $c$. Put $Z=Z(N)$. It is clear that $C_G(Z)\cap G'$ is open in $G'$. Therefore, by Lemma \ref{4a}, there exists a number $t$ and a subgroup $M\leq Z$, which is normal in $G$ and open in $Z$, such that $[M,{}_tG']=1$. Since $Z/M$ is finite, the group $G$ has an open normal subgroup $K$ such that $K\cap Z\leq M$. Let $L=K\cap N$ and $F=M\cap L$. The nilpotency class of $L/F$ is at most $c-1$ and $[F,{}_tG']=1$. By induction, there exists an open normal subgroup $J$ in $G'$ such that $[J,{}_rG']\leq F$ for some positive integer $r$. Therefore $[J,{}_{r+t}G']=1$ and $J\leq Z_{r+t}(G')$. The proof is now complete.
\end{proof}

\end{document}